\newcommand{\shrinkmargins}[1]{
  \addtolength{\textheight}{#1\topmargin}
  \addtolength{\textheight}{#1\topmargin}
  \addtolength{\textwidth}{#1\oddsidemargin}
  \addtolength{\textwidth}{#1\evensidemargin}
  \addtolength{\topmargin}{-#1\topmargin}
  \addtolength{\oddsidemargin}{-#1\oddsidemargin}
  \addtolength{\evensidemargin}{-#1\evensidemargin}
  }
\newcommand{\field}[1]{\mathbb{#1}}
\newcommand{\F}{\field{F}}
\newcommand{\R}{\field{R}}
\newcommand{\A}{\field{A}}
\renewcommand{\P}{\field{P}}
\newcommand{\beq}{\begin{displaymath}}
\newcommand{\eeq}{\end{displaymath}}
\newcommand{\beqn}{\begin{equation}}
\newcommand{\eeqn}{\end{equation}}
\theoremstyle{plain}
\newtheorem{thm}{Theorem}
\newtheorem{cor}[thm]{Corollary}
\newtheorem{lem}[thm]{Lemma}
\theoremstyle{definition}
\theoremstyle{remark}
\newtheorem{rem}[thm]{Remark}
\newtheorem{question}[thm]{question}
\title{An incidence conjecture of Bourgain over fields of positive characteristic}
\author{Jordan S. Ellenberg and M\'{a}rton Hablicsek}
\begin{document}

\maketitle

The goal of this note is twofold:  first, to generalize a recent theorem of Guth and Katz on incidences between points and lines in $3$-space from characteristic $0$ to characteristic $p$, and second, to explain how some of the special features of algebraic geometry in characteristic $p$ manifest themselves in problems of incidence geometry.

Let $X$ be a reduced and irreducible hypersurface in $\A^n$, i.e. the variety cut out by the vanishing of some irreuducible polynomial $F(x_1, \ldots, x_n)$.  We say that $F$ is {\em flexy} if, for every smooth point of $x$, the tangent plane to $X$ at $x$ meets $X$ with a degree of tangency greater than two.  For instance, to say a plane curve is flexy is to say that every smooth point on the curve is an inflection point.  Of course, for a plane curve over the real numbers, this implies that the curve is a line.  But this is not the case in characteristic $p$.  For instance, the curve with equation $x^3 y + y^3 z + z^3 x$ over $\F_3$ is flexy; this is the curve famously called ``the funny curve" by Hartshorne.

Since we are working over a field of characteristic $p$, the notions of ``tangency" and ``differential" relevant here are the algebraic versions.  In particular, to say $X$ passes through the origin is to say $F$ vanishes at the origin, and thus has a Taylor series there that starts:
\beq
F(x_1, \ldots, x_n) = 0 + F_1(x_1, \ldots, x_n) + F_2(x_1, \ldots, x_n) + h.o.t.
\eeq
where $F_i$ is a homogenous polynomial of degree $i$.  To say that $X$ is flexy at the origin is precisely to say that $F_2$ is divisible by $F_1$.  (The ``flexy" points here are precisely those points which are called ``flat points" by Guth and Katz; we have avoided the word ``flat" here in order to avoid conflict with its other uses in arithmetic geometry.)
  
\begin{thm}  Let $k$ be a field and let $L$ be a set of $N^2$ lines in $k^3$, such that
\begin{itemize}
\item  no more than $2N$ lines lie in any plane;
\item  no more than $2Nd$ lines lie in any flexy surface of degree $d$.
\end{itemize}
Let $S$ be a set of points such that each line in $L$ contains at least $N$ points of $S$.  Then $|S| > cN^3$ for some absolute constant $c$.
\label{th:main}
\end{thm}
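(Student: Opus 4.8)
The plan is to run the Guth--Katz polynomial method, paying attention to the places where positive characteristic intervenes. Suppose toward a contradiction that $|S|\le cN^3$ for a small absolute constant $c$ to be fixed at the end. Because the space of polynomials of degree $\le D$ in three variables has dimension $\binom{D+3}{3}>D^3/6$, interpolation produces a nonzero $P\in k[x_1,x_2,x_3]$ of degree $D=O(c^{1/3}N)$ vanishing on $S$; replacing $P$ by its squarefree part does not raise the degree, so we may assume $P$ is squarefree, and for $c$ small we have $D<N$. Then each line $\ell\in L$ contains at least $N>D$ points of $S\subseteq Z(P)$, so $P$ restricted to $\ell$ vanishes identically and $\ell\subseteq Z(P)$. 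Writing $Z(P)=\bigcup_i Z_i$ for the decomposition into irreducible components, we have $\sum_i\deg Z_i\le D$, and every line of $L$ lies on some $Z_i$.

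The heart of the argument is a classification of the components $Z_i$ that carry many lines of $L$. Lines lying on a planar component number at most $2N$ by the first hypothesis, and there are at most $D$ components, so planar components account for at most $2ND$ lines in all. Lines lying on a flexy component $Z_i$ number at most $2N\deg Z_i$ by the second hypothesis, so flexy components account for at most $2N\sum_i\deg Z_i\le 2ND$ lines in all. For the remaining components the classical flecnode construction, carried over to characteristic $p$, comes in: there is a nonzero polynomial $\mathrm{Fl}(P)$ of degree $O(D)$ vanishing along every line contained in $Z(P)$ --- such a line meets $Z(P)$ with contact order at least $3$ at each of its points --- and the components of $Z(P)$ on which $\mathrm{Fl}(P)$ does \emph{not} vanish identically carry all of their lines, in particular all their lines of $L$, on a curve of degree $O(D\cdot\deg Z_i)$, contributing $O(D^2)$ lines of $L$ in total. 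Finally, a component $Z_i$ on which $\mathrm{Fl}(P)$ vanishes identically is flecnodal everywhere, and one shows (this is the characteristic-$p$ input described below) that such a surface is a plane, or flexy, or ruled; a ruled non-planar surface of degree $e$ has all but $O(e^2)$ of its lines lying in one or two rulings, whose members are pairwise skew or all concurrent, so that $m$ of them in $L$ force roughly $m$ disjoint batches of $N$ points of $S$. Combining all of these bounds --- with the care (as in Guth and Katz) needed to keep the points of $S$ shared among several components from being overcounted, so that the batches really are essentially disjoint --- gives $N^2\le O(ND)+O(D^2)+O(|S|/N)$, which is $O(c^{1/3})N^2<N^2$ once $c$ is a small enough absolute constant: the contradiction, which fixes $c$.

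The step I expect to be the real obstacle is the characteristic-$p$ geometry of the flecnodal-everywhere components and the classification invoked above. In characteristic $0$ a flecnodal-everywhere surface is automatically ruled, so only planes and ruled surfaces occur --- and the original Guth--Katz theorem needs only a hypothesis about planes, together with one about reguli, which here are handled for free by the skew-rulings bound. In characteristic $p$ this is false: the surface analogue of Hartshorne's funny curve $x^3y+y^3z+z^3x$ over $\F_3$ is flecnodal everywhere but not ruled, and surfaces of this type are exactly what the theorem calls flexy. One must therefore establish the trichotomy ``plane, ruled, or flexy'' for flecnodal-everywhere surfaces over an arbitrary field, and it is precisely the flexy case --- which, thanks to the funny-curve phenomenon, cannot be controlled by pure algebraic geometry --- that forces the second hypothesis of the theorem into the statement. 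A related subtlety is that the differential operators detecting flexiness, via the divisibility $F_1\mid F_2$ of the Taylor expansion, and the flecnode polynomial itself can degenerate in small characteristic --- vanishing identically for reasons parallel to the vanishing of the Hessian of the funny curve --- so one has to check that they retain the degree bounds and nonvanishing properties used in the count above, or else absorb the degeneracies into the flexy case. The remainder is the combinatorial bookkeeping, which follows Guth and Katz.
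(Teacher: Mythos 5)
There is a genuine gap, and it sits exactly where you predicted: the classification of flecnodal-everywhere components. Your argument routes everything through the flecnode polynomial $\mathrm{Fl}(P)$ and then needs the trichotomy ``a surface on which $\mathrm{Fl}(P)$ vanishes identically is a plane, ruled, or flexy.'' In characteristic $0$ this is the Cayley--Salmon/Monge theorem; in characteristic $p$ it is precisely the statement that fails (or at best requires a separate, substantial proof with restrictions on $p$ versus the degree -- this was only worked out later, by Koll\'{a}r and others, and not in the generality you would need here). Note also that flexy is a second-order condition ($F_1\mid F_2$) while flecnodal is a third-order one, so flexy points are flecnodal but not conversely: the flecnodal-everywhere surfaces form a strictly larger class than planes and flexy surfaces, and you have not shown the excess is ruled. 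Since you explicitly defer this step, and also defer the degeneration of the third-order differential operators in small characteristic and the overcounting issues in the ruled/skew-batches count, the proposal does not constitute a proof.

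The paper avoids all of this with a more elementary, purely second-order argument. Instead of decomposing $Z(P)$ and classifying every component, it pigeonholes (dyadically in the number of lines through a point, then over irreducible factors of $P$) down to a single irreducible surface $X$ of degree $d\leq N/4$ together with sets $S'$, $L'\supset L''$ such that every point of $S'$ lies on at least $3$ lines of $L'$, every line of $L''$ lies in $X$ and meets at least $10d$ points of $S'$, and $|L''|\geq 2Nd$. The key local observation is that a smooth point of $X$ lying on three concurrent lines contained in $X$ is flexy, because all three lines lie in the tangent plane and a nonzero binary quadratic form has at most two linear factors; so every point of $S'$ is singular or flexy. If $X$ is neither a plane nor flexy, the singular locus lies on a curve of degree $\leq d(d-1)$ and the flexy locus on the curve $V(F)\cap V(G)$ of degree $d(3d-4)$, where $G=F_x^2F_{yy}/2+F_y^2F_{xx}/2-F_xF_yF_{xy}$ has degree $3d-4$ (with divided powers in characteristic $2$); hence fewer than $4d^2\leq 2Nd$ lines of $X$ can be rich in such points, contradicting $|L''|\geq 2Nd$. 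If $X$ is a plane or flexy, the $2Nd$ lines of $L''$ inside it contradict the hypotheses directly. No flecnode polynomial, no ruled surfaces, and no Cayley--Salmon input are needed, which is exactly why the proof survives in characteristic $p$.
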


\begin{rem} The notion of ``flexiness" is very closely related to that of {\em non-reflexivity}, the failure of the map from $X$ to its dual variety to be generically smooth; indeed, for curves in odd characteristic the two notions are the same, as \cite[Thm 5.90]{hirschfeld} shows.  We have chosen to use the less standard criterion ``flexiness" on the grounds that it is simpler to describe and fits more naturally into the proof of the theorem.
\end{rem}

In characteristic $0$, every flexy variety is a plane.   In that case, Theorem~\ref{th:main} is an assertion about a family of $N^2$ lines, no $N$ of which are contained in any plane.  This latter condition can be thought of as a form of the {\em Wolff axiom}.  So, when $k$ has characteristic $0$, the conclusion of Theorem~\ref{th:main} is a theorem of Guth and Katz~\cite[Theorem 2]{guth:guthkatz}, which settles a conjecture of Bourgain.  (The theorem in \cite{guth:guthkatz} is stated for $k = \R$, but the proof works word for word over any field of characteristic $0$.)

In characteristic $p$, however, Theorem~\ref{th:main} does not hold without the restriction concerning lines lying in a flexy surface.  For example, take $k = \F_{p^2}$ and let $X \subset \A^3$ be the ``Heisenberg surface" cut out by the equation
\beq
x - x^p + yz^p - zy^p = 0.
\eeq
Then the lines of the form $\{(a,b,0)+t(\bar{b},v,1)|t\in \F_{p^2}\}$, where $a$ and $v$ both lie in $\F_p$ and $\bar{b}$ denotes the Galois conjugate $b^p$ of $b$, lie in $X$.  There are $p^4$ such lines; let $L$ be this set of lines.  Now take $S$ to be the set $X(\F_{p^2})$ and take $N = p^2$.  The intersection of any plane with $X$ is a curve of degree $p$, which can contain at most $p$ (that is, $N^{1/2}$) lines.  But $S$ clearly contains all $N$ of the $\F_{p^2}$-rational points on each of the lines in $L$.  Finally, one can check that $|S| \sim N^{5/2}$; so this set does not conform to the conclusion of Theorem~\ref{th:main}.  However, $X$ is a flexy surface of degree $N^{1/2}+1$, in which all $N^2$ of the lines are contained; so this counterexample is excluded by the hypothesis of Theorem~\ref{th:main}.

We note that the Heisenberg surface is precisely the one that appears in the paper of Mockenhaupt and Tao~\cite[\S 8]{mocktao} as an example of a set $P$ of points in $3$-dimensional space over a finite field $\F$ which contains $|\F|^2$ lines, no $|\F|$ of which are contained in a plane, but which has cardinality much less than $|\F|^3$.  The Mockenhaupt-Tao paper concerned Kakeya sets in $\F^3$:  subsets containing a line in each of the $\sim |\F|^2$ possible directions.  Kakeya sets in $\F^3$ are now known, by Dvir's theorem~\cite{dvir}, to have cardinality on order $|\F|^3$.  The Heisenberg surface shows that, by contrast, there are much smaller subsets of $\F^3$ which satisfy the Wolff axiom and which contain $|\F|^2$ lines, once we relax the condition that these lines all point in different directions.

In the important case where $k = \F$ is a finite field and $N = |\F|$, Theorem~\ref{th:main} can be seen as a strengthening of Dvir's theorem in the $3$-dimensional case.  Suppose $L$ is a set of $N^2$ lines in $\F^3$ which satisfies the Kakeya condition; the lines all point in distinct directions.  We may think of $\F^3$ as being affine space embedded in projective space $\P^3(\F)$.  Take $H$ to be the plane at infinity; then the Kakeya condition can be rephrased as saying that the lines in $L$ intersect the plane $H$ at infinity in $N^2$ distinct points.  If $X$ is a hypersurface of degree $d$, then any line in $L$ which is contained in $X$ must intersect $H$ somewhere on the curve $X_0 = X \cap H$.  Since $X_0$ is a degree-$d$ plane curve, it has at most $d(\F|+1)$ points; thus, at most $d(|\F|+1)$ of the lines can be contained in the hypersurface $X$.  In particular, the Kakeya condition implies the conditions of Theorem~\ref{th:main}.  But the weaker onditions of Theorem~\ref{th:main} already suffice to guarantee that the union of the $|\F|^2$ lines contains a positive proportion of the points of $\F^3$.  Our point of view is that the conditions of Theorem 1 should be thought of as the appropriate modification of the Wolff axiom to use in a characteristic $p$ context.

When $\F$ is a prime field $\F_p$, the situation is even more agreeable.  We note that flexy surfaces which are not planes have to have degree at least as great as the characteristic:

\begin{lem}  Let $k$ be a field and let $X \in \A^n/k$ be a (reduced, irreducible) flexy hypersurface of degree $d > 1$.  Then $d \geq p$.
\label{le:degree}
\end{lem}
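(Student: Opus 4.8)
The plan is to reduce to the case of plane curves and then analyze a single local branch at a general point. Throughout we may assume $k$ is algebraically closed, as none of the hypotheses or the conclusion is affected by extending scalars to $\bar k$; and we may assume $\car k = p > 0$, since over a field of characteristic zero a flexy hypersurface is a plane, so the hypothesis $d>1$ is vacuous.

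First I would reduce to $n = 2$. Assuming $n \ge 3$, let $C = X \cap \Pi$ for a general $2$-plane $\Pi \subset \A^n$. By Bertini's theorems $C$ is a reduced, irreducible plane curve of degree $d$, and I claim it is again flexy. At a general point $y$ of $C$ the plane $\Pi$ is transverse to $X$, so $y$ is a smooth point of both and the tangent line to $C$ at $y$ is $T_yX \cap \Pi$, a line lying inside the tangent hyperplane $T_yX$. In local coordinates at $y$ with $X = \{x_1 = \psi(x_2,\dots,x_n)\}$ and $T_yX = \{x_1 = 0\}$, flexiness of $X$ at $y$ means $\psi$ has no terms of degree $\le 2$, and hence every line through $y$ contained in $\{x_1 = 0\}$ meets $X$—and so meets $C$, since such a line already lies in $\Pi$—with intersection multiplicity $\ge 3$ at $y$. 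Thus the general point of $C$ is a flex, so $C$ is flexy, and replacing $X$ by $C$ we are reduced to the case of a reduced, irreducible, flexy plane curve $C$ of degree $d \ge 2$.

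Now I would pick a general, hence smooth, point $P \in C$ and choose affine coordinates with $P$ at the origin and $T_PC = \{x_1 = 0\}$, so that near $P$ the curve is a graph $x_1 = \phi(x_2)$ with $\phi \in k[[x_2]]$ and $\phi(0) = \phi'(0) = 0$. For $t$ near $0$ the point $(\phi(t),t)$ is again a general, hence smooth, point of $C$, and the intersection multiplicity of its tangent line with $C$ there equals the order of vanishing in $s$ of $\phi(t+s) - \phi(t) - \phi'(t)s = \sum_{j\ge 2}(D^{[j]}\phi)(t)\,s^j$, where $D^{[j]}$ is the $j$-th Hasse--Schmidt derivative. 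Flexiness forces this order to be $\ge 3$ for all such $t$, hence the identity $D^{[2]}\phi = 0$. Writing $\phi = \sum_k a_k x_2^k$ and $D^{[2]}\phi = \sum_k \binom{k}{2} a_k x_2^{k-2}$, this says $a_k = 0$ whenever $\binom{k}{2}\not\equiv 0 \pmod p$; and one checks that $p \mid \binom{k}{2}$ together with $k \ge 2$ forces $k \ge p$ (for $p$ odd this is because $p \mid k(k-1)$, and for $p=2$ because $\binom{k}{2}$ is odd for $k \in \{2,3\}$). Since also $a_0 = a_1 = 0$, we conclude that either $\phi \equiv 0$ or $\phi$ vanishes to order at least $p$ at $x_2 = 0$. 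The first case would make a neighborhood of $P$ in $C$ equal to the line $\{x_1 = 0\}$, forcing the irreducible curve $C$ to be that line, against $d \ge 2$. So the tangent line $\{x_1 = 0\}$ meets $C$ at $P$ with multiplicity $\ge p$; since $C$ is irreducible of degree $d \ge 2$ this line is not a component of $C$, so by Bézout that multiplicity is at most $d$. Therefore $d \ge p$.

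The substantive point — and the only place the characteristic enters — is the implication "$C$ flexy $\Rightarrow D^{[2]}\phi \equiv 0$", which, because of the way binomial coefficients vanish mod $p$, forces the local branch to have contact order $\ge p$ with its tangent line; in characteristic zero the same computation instead forces $\phi$ to be linear, recovering the classical fact that flexy curves are lines. By contrast, the reduction to plane curves is routine given Bertini's theorems, the one thing to check being that flexiness descends to a general plane section, which is the short local computation indicated above.
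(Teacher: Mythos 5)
Your proof is correct and shares the paper's first step---reducing to a general plane section, whose flexiness you verify by the same local computation the paper calls ``immediate''---but it diverges exactly where the paper stops: the paper cites the known fact that a flexy plane curve of degree $>1$ has degree at least $p$ (Hirschfeld--Korchm\'aros--Torres, Thm.~5.90), whereas you reprove that fact from scratch via the Hasse--Schmidt expansion of a local branch. Your computation is the right one and is essentially the standard proof of the cited result; what it buys is a self-contained argument and an explanation of \emph{why} the characteristic enters (through the vanishing of $\binom{k}{2}$ mod $p$ for $2\le k<p$), which the paper only gestures at in its remark about inseparability of the Gauss map. Two small points of rigor. First, the phrase ``for $t$ near $0$'' has no literal meaning for a formal power series over an abstract field; the clean way to obtain the identity $D^{[2]}\phi\equiv 0$ in $k[[x_2]]$ is either to observe that the polynomial flex condition $G=F_{x_1}^2\frac{F_{x_2x_2}}{2}+F_{x_2}^2\frac{F_{x_1x_1}}{2}-F_{x_1}F_{x_2}F_{x_1x_2}$ (the same $G$ used in the proof of Lemma~\ref{le:badlines}) vanishes identically on the irreducible curve $C$, hence on the formal branch, where it restricts to a unit times $D^{[2]}\phi$; or to regard $D^{[2]}_{x_2}x_1$ as an element of the function field $k(C)$, which vanishes because it vanishes at the infinitely many smooth flexes of $C$. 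Second, your Bertini step shows only that the \emph{general} point of $C=X\cap\Pi$ is a flex; to conclude that $C$ is flexy at \emph{every} smooth point, as the definition demands, one should add that the flex condition $G=0$ is closed on the smooth locus---though in fact your argument only ever uses flexiness at a general point, so nothing is lost. Neither issue is a gap in the mathematics, only in the phrasing.
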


\begin{proof}  
Without loss of generality we assume $k$ is algebraically closed.  It is immediate that any hyperplane section of a flexy variety is a flexy subvariety.  Choose a plane in $\A^n$ whose intersection with $X$ is an irreducible curve; then $C$ is a flexy plane curve of degree $d > 1$, which is well-known to have degree at least $p$ (see e.g. \cite[Thm 5.90]{hirschfeld}.)
\end{proof}

We remark that the real arithmetic content of Lemma~\ref{le:degree} is that, when $X$ is flexy, the map from $X$ to its dual is not generically smooth, which means that it must be inseparable, which means that its degree must be a multiple of $p$.



\medskip

From Lemma~\ref{le:degree} and Theorem~\ref{th:main} one immediately obtains the following corollary:

\begin{cor} Let $L$ be a set of $p^2$ lines in $\F_p^3$, no more than $p$ of which lie in any plane.  Then the union of all the lines in $L$ has cardinality at least $cp^3$ for some absolute constant $c$.
\label{co:fp}
\end{cor}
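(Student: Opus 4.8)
The plan is to apply Theorem~\ref{th:main} directly, with $k = \F_p$, with $N = p$, and with $S$ taken to be the union $\bigcup_{\ell \in L} \ell$ of all the lines in $L$. The entire content of the corollary is that the hypotheses of the theorem are satisfied in this situation. First, $|L| = p^2 = N^2$, as required. Since every line in $\F_p^3$ consists of exactly $p$ points, each line in $L$ contains exactly $N = p$ points of $S$, so the hypothesis that each line of $L$ contains at least $N$ points of $S$ holds. It then remains only to verify the two bulleted conditions.

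The plane condition asks that no more than $2N$ lines lie in any plane; by hypothesis no more than $p = N < 2N$ lines lie in any plane, so this is immediate. For the flexy-surface condition, I would split into cases according to the degree $d$ of the flexy surface $X$. If $d = 1$, then $X$ is a plane and the bound just established gives at most $N \le 2Nd$ lines of $L$ in $X$. If $d > 1$, then Lemma~\ref{le:degree} forces $d \ge p = N$, whence $2Nd \ge 2N^2 = 2p^2 > p^2 = |L|$; so the condition is vacuously true, since $L$ contains only $p^2$ lines in total. In either case the second hypothesis of Theorem~\ref{th:main} holds, and the theorem yields $|S| > cN^3 = cp^3$, which is exactly the assertion of the corollary.

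There is no genuine obstacle here beyond this bookkeeping; the single point worth isolating is why the flexy-surface hypothesis of Theorem~\ref{th:main} becomes automatic once $N = p$. It is precisely Lemma~\ref{le:degree} that does this: any flexy surface that is not a plane has degree at least $p$, hence is ``allowed'' to contain as many as $2p^2$ lines of $L$ — more than the $p^2$ lines we have — so only planes impose a real restriction, and planes are already governed by the first hypothesis. Thus the two conditions of Theorem~\ref{th:main} collapse, in the case $k=\F_p$, $N=p$, to the single Wolff-type condition stated in the corollary.
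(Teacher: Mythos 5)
Your proof is correct and follows essentially the same route as the paper: apply Theorem~\ref{th:main} with $N=p$ and observe, via Lemma~\ref{le:degree}, that any non-planar flexy surface has degree $d\geq p$, so the bound $2Nd\geq 2p^2>|L|$ makes the second hypothesis vacuous. Your write-up merely spells out a few details the paper leaves implicit (taking $S$ to be the union of the lines, and the $d=1$ case), but the substance is identical.
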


\begin{proof}
Apply Theorem~\ref{th:main} with $N = p$, noting that the second part of the hypothesis is vacuous, since $2Nd > 2p^2 > |L|$ for any flexy surface which is not a plane.
\end{proof}

In other words, the conclusion of Guth and Katz regarding incidences of lines and points over $\R$ remains true as an assertion about lines and points over $\F_p$, while it is false, as witnessed by the Heisenberg surface, when $k = \F_{p^2}$.  

One might ask over which finite fields the analogue of Corollary~\ref{co:fp} holds; in fact, it is true {\em only} for finite fields of prime order, as we now demonstrate.

Specifically, we will show that the hypersurface $X$ of $\F_{p^n}^3$ cut out by the polynomial
$$f(x,y,z)=x+x^p+\dots +x^{p^{n-1}}+yz^p+y^pz^{p^2}+\dots+y^{p^{n-1}}z-yz^{p^{n-1}}-y^pz-\dots- y^{p^{n-1}}z^{p^{n-2}}$$
contains $p^{2n}$ lines, but $|X(\F_{p^n})|=p^{3n-1}$.

First of all, we note that the expression $x + \ldots + x^{p^{n-1}}$, as $x$ ranges over $\F_{q^n}$, takes each value in $\F_q$ exactly $q^{n-1}$ times.  It follows that $f$, considered as a map from $\F_{q^n}^3$ to $\F_q$, takes each value $q^{3n-1}$ times.  In particular, $|X(\F_{p^n})|=p^{3n-1}$. Next, we show that the surface contains at least $p^{2n}$ lines. We are only interested in lines which intersect the $xy$-plane transversely, i.e., which are of the form $L_{(a,b,u,v)}=\{(a,b,0)+t(u,v,1)|t\in \F_{p^n}\}$. Note that the values of $a,b,u,v$ uniquely determine the line.

If $L_{(a,b,u,v)}\subset X$, then the triples $x=a+tu$, $y=b+tv$, $z=t$ are solutions for $f(x,y,z)$ for any value of $t$. Therefore the coefficients of the $t^j$'s vanish. It is straightforward to check that the coefficients $c_l$ of $t^l$ can be characterized as follows:
\begin{itemize}
\item if $l=p^j+p^{j-1}$, then $c_l=c_{p^j+p^{j-1}}=v^{p^{j-1}}-v^{p^{j}}$,
\item if $l=p^j$, then $c_l=c_{p^j}=b^{p^{j-1}}-b^{p^{j+1}}+u^{p^j}$,
\item if $l=0$, then $c_l=c_0=a+a^p+\dots +a^{p^{n-1}}$, and
\item $c_l=0$, otherwise.
\end{itemize}
Note that $c_{p^i}^p=c_{p^{i+1}}$, therefore $c_{p^i}$ vanishes if and only if $c_{p^j}$ vanishes. Similarly, $c_{p^i+p^{i-1}}=0$ if and only if $c_{p^{i+1}+p^i}=0$. As a consequence $L_{(a,b,u,v)}\subset X$ if and only if
\begin{itemize}
\item $v-v^p=0$,
\item $b-b^{p^2}+u^p=0$,
\item $a+a^p+\dots a^{p^{n-1}}=0$.
\end{itemize}
So a line is given by one of the $p$ choices for $v$, one of the $p^n$ choices for $b$ (which determines $u$) and one of the $p^{n-1}$ choices for $a$.  So the number of lines of the form $L_{(a,b,u,v)}$ contained in $X$ is $p\cdot p^n\cdot p^{n-1}=p^{2n}$.

\begin{question}:  Arguing as above, one can show that, if $q$ is a prime power $p^m$ with $m > 1$, there is a set of $q^2$ lines in $\F_q^3$, no $q$ contained in a plane, whose union has cardinality $\sim q^{3-1/d}$, where $d$ is the smallest nontrivial divisor of $m$.   Is this sharp?  (The argument of Mockenhaupt and Tao shows that the union can be no smaller than $q^{5/2}$, so the bound is sharp when $m$ is even.)  This question might be approachable by a more refined description of flexy surfaces of low degree.
\end{question}

\bigskip

We now prove Theorem~\ref{th:main}.

\bigskip

\begin{proof}

The following lemma is unchanged from Guth-Katz (see the proof of Theorem 1.2 of \cite{guth:guthkatz}), which we add for the sake of completeness.

\begin{lem} Let $L$ be a set of $N^2$ lines in $k^3$ and let $S$ be a set of points such that each line in $L$ contains at least $N$ points of $S$. Suppose $|S|=\frac{N^3}{K}$, where $K$ is a sufficiently large constant. Then there exists
\begin{itemize}
\item an irreducible hypersurface $X$ of degree $d\leq \frac{N}{4}$,
\item a subset $S'\subset S\cap X(k)$,
\item subsets $L''\subset L'\subset L$, with $|L''|\geq 2Nd$.
\end{itemize}
such that
\begin{itemize}
\item each point on $S'$ is on at least 3 lines of $L'$,
\item each line in $L''$ contains at least 10$d$ points of $S'$.
\end{itemize}
\label{est}
\end{lem}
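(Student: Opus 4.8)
The plan is to carry out the Guth--Katz degree-reduction argument: a polynomial-method construction of a low-degree surface, followed by a pigeonhole over its irreducible components weighted by their degrees. Observe first that the hypotheses force $|S| = N^3/K \geq 1$, i.e. $N \geq K^{1/3}$, so $N$ is automatically as large as we wish once $K$ is; all inequalities below involving $K$ hold once $K$ exceeds a suitable absolute constant. First I would pass to high-multiplicity points: let $S_1 \subseteq S$ be the set of points lying on at least three lines of $L$. The $N^2$ lines account for at least $N^3$ incidences with $S$, of which the points of $S \setminus S_1$ account for at most $2|S| = 2N^3/K$; hence $\sum_{\ell \in L}|\ell \cap S_1| \geq N^3(1 - 2/K)$. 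Next I would discard lines poor in $S_1$ by setting $L^* = \{\ell \in L : |\ell \cap S_1| \geq N/2\}$: for $\ell \notin L^*$ more than $N/2$ of the $\geq N$ points of $\ell \cap S$ lie in $S \setminus S_1$, so $\frac{N}{2}|L \setminus L^*| < \sum_{\ell \in L}|\ell \cap (S\setminus S_1)| = \sum_{p \in S\setminus S_1}\#\{\ell : p \in \ell\} \leq 2|S| = 2N^3/K$, whence $|L \setminus L^*| < 4N^2/K$ and $|L^*| \geq N^2/2$. This pruning is what ultimately yields the ``on at least three lines'' and ``at least $10d$ points'' conditions.

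Now I would set $d_0 = \lceil 2N/K^{1/3}\rceil$, so that $d_0 \leq N/20$ for $K$ large. The space of polynomials of degree $\leq d_0$ on $\A^3$ has dimension $\binom{d_0+3}{3} > d_0^3/6 > N^3/K \geq |S_1|$, so there is a nonzero $g \in k[x,y,z]$ of degree $\leq d_0$ vanishing on all of $S_1$; I would replace $g$ by the product of its distinct irreducible factors, so that $g$ is squarefree of degree $\leq d_0$, and put $Z = \{g = 0\}$. Each $\ell \in L^*$ meets $S_1$, hence $Z$, in at least $N/2 > d_0$ points, so the restriction of $g$ to $\ell$ is a one-variable polynomial of degree $\leq d_0$ with too many zeros and vanishes identically; thus $\ell \subseteq Z$.

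I would then decompose $Z = X_1 \cup \cdots \cup X_r$ with the $X_i$ irreducible and $\sum_i \deg X_i = \deg g \leq d_0$. Each line of $L^*$, being irreducible, lies on some $X_i$; put $a_i = \#\{\ell \in L^* : \ell \subseteq X_i\}$, so $\sum_i a_i \geq |L^*| \geq N^2/2$. If $a_i < 2N\deg X_i$ for every $i$ we would get $N^2/2 \leq \sum_i a_i < 2N\sum_i \deg X_i \leq 2Nd_0 < N^2/2$, a contradiction; hence some component $X := X_i$ is an irreducible hypersurface of degree $d := \deg X \leq d_0 \leq N/4$ containing at least $2Nd$ lines of $L^*$. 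To finish I would take $L' = L$, let $L''$ be the set of lines of $L^*$ contained in $X$ (so $|L''| \geq 2Nd$), and put $S' = S_1 \cap X(k) \subseteq S \cap X(k)$. Every point of $S'$ lies in $S_1$, hence on at least three lines of $L = L'$; every $\ell \in L''$ lies in $X$ and has $|\ell \cap S_1| \geq N/2$, and since $\ell \subseteq X$ these points lie in $S_1 \cap X(k) = S'$, with $N/2 \geq 10d_0 \geq 10d$. This is exactly the conclusion of the lemma.

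The only step carrying any content beyond bookkeeping is the last pigeonhole: converting a bound on the number of lines lying on a possibly reducible degree-$d_0$ surface into the existence of a single irreducible component carrying at least $2N$ times its own degree in lines. Everything else amounts to choosing $d_0$ of size comparable to $N/K^{1/3}$ so that four things hold simultaneously: $d_0 \leq N/4$; a form of degree $d_0$ can be forced through $S_1$, which needs $\binom{d_0+3}{3} > |S_1|$; the threshold $N/2$ dominates both $d_0$ and $10d_0$; and $2Nd_0$ stays below $|L^*| \geq N^2/2$. All four are satisfied with room to spare once $K$ is a large enough absolute constant, so I do not expect any genuine obstacle here --- consistent with the paper's remark that the lemma is unchanged from Guth--Katz.
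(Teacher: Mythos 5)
Your argument does establish every bulleted conclusion of the lemma as literally stated, and by a genuinely different (and for that limited purpose cleaner) route: instead of the paper's dyadic pigeonhole over the multiplicity $v(x)$ followed by three rounds of incidence pruning and the count $I(S',L'')\geq 2N^2d$, you prune once to $S_1$ and $L^*$, trap $L^*$ in a squarefree surface $Z$ of degree $O(N/K^{1/3})$, and extract the component $X$ by a pigeonhole weighted by component degrees, which yields $|L''|\geq 2Nd$ directly. The quantitative checks ($\binom{d_0+3}{3}>|S_1|$, $N/2>d_0$, $10d_0\leq N/2$, $2Nd_0<N^2/2$) all go through once $K$ exceeds an absolute constant.

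However, your choice $L'=L$ is a real problem. It makes the condition ``each point of $S'$ is on at least $3$ lines of $L'$'' vacuous (it is just the definition of $S_1$ restricted to $X$), and it discards the property the lemma is actually invoked for: in the proof of Theorem~1, the three lines of $L'$ through a point of $S'$ must be \emph{contained in $X$}, so that the point is forced to be a singular or flexy point of $X$. In the paper's construction $L'$ consists of lines meeting $X(k)$ in more than $100d>d$ points, hence lines lying in $X$; with $L'=L$ a point of $S_1\cap X(k)$ may lie on three lines none of which is in $X$ (or even in $L^*$). Repairing this inside your framework is not a one-line fix: you would have to redefine $S'$ as the points lying on at least three lines contained in $X$ and then show that enough points and lines survive this further pruning --- which is precisely the extra bookkeeping (and the lower bound on the size of the point set on $X$ in terms of $d$, supplied by the dyadic pigeonhole) that the paper's longer proof exists to provide. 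So: correct for the statement as written, but not a proof of the statement the rest of the paper needs.
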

\medskip

\begin{proof}

We assume, again without loss of generality, that $k$ is algebraically closed.

If $I$ is the set of incidences, i.e. the set of pairs $(p,\ell)$ with $p$ a point in $S$ and $\ell$ a line in $L$ containing $p$, then $I$ contains at least $N$ points projecting to any given line.  We distinguish a subset $I'$ of $I$ containing {\em exactly} $N$ incidences for each line, and from now on use the word ``incidence" to refer only to these distinguished incidences.  In particular, if $T$ is a subset of $S$ and $M$ a subset of $L$, we denote by $I(T,M)$ the number of incidences $(p,\ell)$ in $I'$ with $p \in T$ and $\ell \in M$.  So $I(S,L) = |L|\cdot N = N^3$.

%

We define $v(x)$ to be the number of lines incident to $x$. We denote by $S_v$ the set of points $x$ of $S$ such that $v(x)\geq \frac{K}{1000}$. Each line is incident to exactly $N$ points, therefore 
\[I(S\setminus S_v,L)\leq |S|\cdot \frac{K}{1000}=\frac{N^3}{K}\cdot \frac{K}{1000}=\frac{N^3}{1000}\]
implying that $I(S_v,L)\geq 999N^3/1000$. 

We define similarly the sets $S_j$ to be the sets of points $x\in S$ such that $\frac{2^{j-1}K}{1000}\leq v(x)\leq \frac{2^jK}{1000}$. Since, $\sum_{j=1}^{\infty} I(S_j,L)\geq I(S_v,L)\geq \frac{999N^3}{1000}$ and $\sum_{j=1}^{\infty} \frac{1}{j^2}<2$, by the pigeonhole principle, there exists a $j \geq 1$ such that $I(S_j,L)\geq \frac{999N^3}{2000j^2}$. Since for each element $x$ of $S_j$, $v(x)\leq \frac{2^jK}{1000}$, we obtain 
\[|S_j|\cdot  \frac{2^{j}K}{1000}\geq I(S_j,L)\geq \frac{999N^3}{2000j^2},\]
implying that
\[|S_j|\geq\frac{999N^3}{2K2^{j}j^2}.\]

Similarly, since $\frac{2^{j-1}K}{1000}\leq v(x)$ for each element of $S_j$, thus $N^3=I(S,L)\geq I(S_j,L)\geq |S_j|\cdot \frac{2^{j-1}K}{1000}$, implying
\[\frac{2000N^3}{K2^j}\geq |S_j|\geq \frac{999N^3}{2K2^{j}j^2}.\]
For any set $T\subset k^3$ of size at most ${d+3 \choose 3}$, there exists a polynomial of degree at most $d$ vanishing on the points of $T$. Since $|S_j|\leq \frac{2000N^3}{K2^j}$, there exists a polynomial $P$ of degree at most $\frac{25N}{K^{1/3}2^{j/3}}$ vanishing on $S_j$. We can assume that $P$ is square-free and separable.  It may not be irreducible; if it not, we may factor it into irreducible factors, $P=P_1P_2\dots P_m$. We denote the degrees of the $P_l$ by $d_l$. Let $S_{j,l}$ be the set of points of $S_j$ where $P_l$ vanishes. We have
\[\sum_{l=1}^m d_l\leq \frac{25N}{K^{1/3}2^{j/3}}
\mbox{ and } 
\sum_{l=1}^m |S_{j,l}|\geq \frac{999N^3}{2K2^jj^2}.\]
Again, by the pigeonhole principle, we can find an $l$ such that
$$|S_{j,l}|\geq \frac{999N^2d_l}{50K^{2/3}2^{2j/3}j^2}.$$
We denote by $X$ the hypersurface cut out by $P_l$, and by $d$ the corresponding degree $d_l$.  Note that $S_{j,l} \subset X(k)$.

We denote by $L'$ the set of lines in $L$ incident to more than $100d$ points of $X(k)$. Clearly, $L'\subset L\cap X(k)$ and
\[I(S_{j,l},L\setminus L')\leq |L'|100d\leq |L|100d=100N^2d.\]
A similar calculation shows that if $S'$ denotes the set of points of $S_{j,l}$ incident to at least 3 lines of $L'$, then we have $I(S_{j,l}\setminus S',L')\leq 2|S_{j,l}|.$ Finally, if $L''$ denotes the set of lines in $L'$ incident to more than $10d$ points of $S'$, then $I(S',L'\setminus L'')\leq |L|10d=10N^2d$.
Combining the above inequalities we have
\begin{align*}I(S',L'')&\geq I(S_{j,l},L)-I(S_{j,l},L\setminus L')-I(S_{j,l}\setminus S',L')-I(S',L'\setminus L'')\geq\\
&\geq |S_{j,l}|\frac{2^{j-1}K}{1000}-100N^2d-2|S_{j,l}|-10N^2d.\end{align*}
By definition $|S_{j,l}|\geq \frac{999N^2d}{50K^{2/3}2^{2j/3}j^2}$, thus we can choose a sufficiently large $K$ so that $I(S',L'')\geq 2N^2d$. Since every line in $L''$ is incident to at most $N$ points of $S'$, we obtain $|L''|\geq 2Nd$.

By taking a possibly larger $K$, we can ensure that $d\leq \frac{N}{4}$.
\end{proof}


\begin{lem}  Let $X$ be a reduced irreducible non-flexy surface of degree $d>1$ in $\A^3$.  Let $L_1$ be the set of lines contained in $X$ which contain at least $d$ singular points of $X$, and let $L_2$ be the set of lines contained in $X$ which contain at least $3d-3$ flexy points of $X$.  Then
\beq
|L_1| + |L_2| < 4d^2.
\eeq
\label{le:badlines}
\end{lem}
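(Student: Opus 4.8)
The plan is to treat $L_1$ and $L_2$ by the same device. In each case I will exhibit an auxiliary hypersurface $V(G)$ that does not contain $X$ and whose degree is one less than the relevant threshold ($\deg G = d-1$ for $L_1$, $\deg G = 3d-4$ for $L_2$), with the property that every singular point (resp.\ flexy point) of $X$ lies on $V(G)$. Then any line in $X$ carrying more points of the relevant type than $\deg G$ must lie inside $V(G)$, hence be one of the (at most $d\cdot\deg G$, by B\'ezout) one‑dimensional irreducible components of the curve $X\cap V(G)$. Summing the two bounds will give a total below $4d^2$.

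For $L_1$ I would first reduce to the case $k$ algebraically closed, as in the preceding lemmas (this is also the only situation in which Lemma~\ref{le:badlines} gets applied, $X$ arising there as an irreducible factor over an algebraically closed field). Since $F$ is irreducible of degree $d>1$ over the now perfect field $k$, not all three partials of $F$ can vanish identically — in characteristic $p$, if they did, $F$ would be a $p$‑th power, contradicting irreducibility. Fix a nonzero partial $G=\partial F/\partial x_i$; then $\deg G=d-1<d$, so $F\nmid G$ and $X\cap V(G)$ is a curve of degree at most $d(d-1)$. Every singular point of $X$ lies on $V(G)$ (Jacobian criterion, using that the ideal of $X$ is $(F)$), so a line with at least $d$ singular points meets $V(G)$ in more than $\deg G$ points and is therefore contained in $V(G)$; hence it is an irreducible component of $X\cap V(G)$, and $|L_1|\le d^2-d$.

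For $L_2$ the same scheme works once one produces flexiness‑detecting polynomials of degree $\le 3d-4$. Translating coordinates to a point $p$ and writing $F=F_1+F_2+\cdots$, flexiness of $X$ at $p$ is the condition $F_1\mid F_2$ (equivalently, vanishing of the second fundamental form at $p$). Performing the division of the quadratic form $F_2$ — whose coefficients are second Hasse derivatives of $F$, of degree $d-2$ — by the linear form $F_1$ — coefficients $\partial_iF(p)$, of degree $d-1$ — and clearing the denominator $\partial_iF(p)^2$ yields, for each $i$, three polynomials $Q_i^{(1)},Q_i^{(2)},Q_i^{(3)}$ in $p$ of degree at most $2(d-1)+(d-2)=3d-4$. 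One checks that these all vanish at every flexy point of $X$ (directly when $\partial_iF(p)\neq 0$, and because the relevant coefficient of $F_2$ vanishes when $\partial_iF(p)=0$), and conversely that at a smooth point $p$ with $\partial_iF(p)\neq 0$, vanishing of $Q_i^{(1)}(p),Q_i^{(2)}(p),Q_i^{(3)}(p)$ forces $F_1\mid F_2$ at $p$. Since $X$ is not flexy there is a smooth, non‑flexy point $p_0\in X$; choosing $i$ with $\partial_iF(p_0)\neq 0$, some $Q:=Q_i^{(a)}$ is nonzero at $p_0$, hence not identically zero on $X$. Then $X\cap V(Q)$ is a curve of degree $\le d(3d-4)$; every line in $L_2$ has at least $3d-3>\deg Q$ flexy points on $V(Q)$ and so is a component of $X\cap V(Q)$, giving $|L_2|\le 3d^2-4d$. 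Adding, $|L_1|+|L_2|\le 4d^2-5d<4d^2$.

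I expect the only real work to be in the construction of the flexiness polynomials: getting the degree down to exactly $3d-4$ so that the threshold $3d-3$ in the hypothesis does its job, and verifying both that they cut out the flexy locus among smooth points and — crucially, since there is no induction — that the single non‑flexy smooth point guaranteed by the hypothesis already keeps one of them off $X$. This is the characteristic‑$p$ incarnation of the Hessian/second‑fundamental‑form computation of Guth--Katz; a little extra care is needed in characteristic $2$, where one should argue with the quadratic form $F_2$ itself rather than with a symmetric Hessian matrix. The B\'ezout bounds in the two steps allow the intersection curves to be non‑reduced, but only the count of one‑dimensional irreducible components matters, and that is at most the product of the degrees.
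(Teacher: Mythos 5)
Your proposal is correct and follows essentially the same route as the paper: a B\'ezout count of the lines forced into $X\cap V(\partial F/\partial x_i)$ (a curve of degree at most $d(d-1)$) and into the intersection of $X$ with a degree-$(3d-4)$ flexiness detector (degree at most $d(3d-4)$), with the thresholds $d$ and $3d-3$ doing exactly the work you describe and the same total $4d^2-5d<4d^2$. The only cosmetic difference is that the paper uses the single explicit polynomial $F_x^2\tfrac{F_{yy}}{2}+F_y^2\tfrac{F_{xx}}{2}-F_xF_yF_{xy}$ (the flexiness condition for plane sections, with the same divided-power convention in characteristic $2$) and shows it is not identically zero on $X$ via a non-flexy hyperplane section, rather than your family $Q_i^{(a)}$ tested at a single smooth non-flexy point.
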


\begin{proof}
Let $F$ be an irreducible squarefree polynomial such that $V(F) = X$.

The singular locus of $X$ is cut out by the vanishing of $F$ and its first partial derivatives, which are of degree $d-1$.  So $F$ and all the partial derivatives vanish identically on every line in $L_1$.  Since $X$ is reduced, the singular locus is a curve $C_1$ in $X$, and since $C_1$ is contained in the intersection of $X$ with one of its partial derivatives, we have $\deg C_1 \leq d(d-1)$.  In particular, no more than $d(d-1)$ lines can be contained in $C_1$, so $|L_1| \leq d(d-1)$.

Let $C_2$ be the locus of flexy points of $X$.  By hypothesis, $C_2$ is a proper subvariety of $X$.  Let $p$ be a non-flexy point of $X$; then the generic hyperplane section of $X$ containing $p$ is a non-flexy curve.  Choose a hyperplane $H$ such that $H \cap X$ is a non-flexy curve $Z$ in $H$.  By change of coordinates, we may assume that $p$ is the origin and $H$ is the plane $z=0$.  So $Z$ is simply the plane curve cut out by the vanishing of $F(x,y,0)$.  At any flexy point of $X$, one has
\beq
G := F_x^2 \frac{F_{yy}}{2} + F_y^2 \frac{F_{xx}}{2} - F_x F_y F_{xy} = 0
\eeq
where $F_x$ (resp. $F_{xy}$) denotes the partial derivative  (resp. second partial derivative) of $F$ with respect to  $x$ (resp. with respect to $x,y$.) In characteristic 2, we define $\frac{F_{xx}}{2}$ to be the divided power operation, in other words $\frac{x^n_{xx}}{2}$ is defined to be $\frac{n(n+1)}{2}x^{n-2}$.

The locus $C_2$ is contained in the locus where both $F$ and $G$ vanish, and the non-flexiness of $Z$ implies that $G$ is not a multiple of $F$; thus $F \cap G$ is a curve of degree $(\deg F)(\deg G) = d(3d-4)$, which contains $C_2$.  The restriction of $G$ to any line in $L_2$ is a polynomial of degree $3d-4$ which vanishes at at least $3d-3$ distinct points, and is thus $0$: so all the lines in $L_2$ are contained in the curve $F \cap G$, which implies that  $|L_2| \leq d(3d-4)$.

\end{proof}

Assume that $|S|<\frac{N^3}{K}$ for a large enough $K$. Take an irreducible hypersurface $X$ of degree $d\leq\frac{N}{4}$ given by Lemma~\ref{est}.  Each line in $L''$ contains at least $10d$ points of $X$, and is thus contained in $X$ as a variety.  And each point in $S'$ is contained in at least three such lines, which implies that it is either a singular point or a flexy point on $X$.  Therefore each point of $S'$ is either a singular point or a flexy point on $X$. It now follows by Lemma~\ref{est} that $|L''|\geq 2Nd\geq 4d^2$. By Lemma~\ref{le:badlines}, $X$ is either a plane or a flexy surface.  But $X$ contains at least $2Nd$ lines of $L$, which violates the hypothesis of the theorem.
\end{proof}


\begin{thebibliography}{10}


\bibitem{dvir}
Z. Dvir,  ``On the size of Kakeya sets in finite fields", {\em Jour. Amer. Math. Soc.} 22 (4), 1093--1097, 2009.


\bibitem{guth:guthkatz}
L. Guth and N. Katz, ``Algebraic methods in discrete analogs of the Kakeya problem", {\em Advances in Mathematics}, 225 (5), 2828--2839, 2010.

\bibitem{hirschfeld}
J.W.P. Hirschfeld and G. Korchm\'{a}ros and F. Torres, \emph{Algebraic Curves over a Finite Field}, Princeton University Press, 2013.

\bibitem{mocktao}
G. Mockenhaupt and T. Tao, ``Restriction and Kakeya phenomena for finite fields", {\em Duke Math. J.} 121 (1), 35--74, 2004.


\end{thebibliography}
\end{document}